\documentclass[12pt,a4paper,reqno]{amsart}
\usepackage[margin=1in]{geometry}
\usepackage{amsmath,amssymb,amsthm}
\usepackage{hyperref}
\hypersetup{colorlinks=true,linkcolor=blue,citecolor=blue,urlcolor=blue}

\numberwithin{equation}{section}

\newtheorem{theorem}{Theorem}[section]
\newtheorem{proposition}[theorem]{Proposition}
\newtheorem{lemma}[theorem]{Lemma}

\theoremstyle{remark}
\newtheorem{remark}[theorem]{Remark}
\newcommand{\C}{\mathbb C}
\DeclareMathOperator{\Hol}{Hol}
\DeclareMathOperator{\Sym}{Sym}

\begin{document}

\title{Commuting Toeplitz operators with harmonic symbols on the Fock space}
\date{}
\author[Jie Qin]{Jie Qin$^{1,*}$}
\thanks{The author was supported by the National Natural Science Foundation of China (12501154) and the Science and Technology Research Program of Chongqing Municipal Education Commission (KJQN202400806).}
\author[Youqi Liu]{Youqi Liu$^1$}
\address{$^1$ School of Mathematics and Statistics,
Chongqing Technology and Business University,
Chongqing 400067, China}
\email{qinjie24520@163.com; yqliu2021@163.com}
\author[Sui Huang]{Sui Huang$^2$}
\address{$^2$ School of Mathematical Sciences,
Chongqing Normal University, 401331 Chongqing, China}
\email{huangsui2@163.com}
\subjclass[2020]{Primary 47B35, Secondary 30H20}
\keywords{Toeplitz operators, Fock space, commuting problem, harmonic symbols}

\begin{abstract}
We give a counterexample to a characterization of commuting
Toeplitz operators due to Bauer, Choe, and Koo [JFA, 268 (2015), 3017--3060].
We prove a corrected characterization for Toeplitz operators with
harmonic symbols of exponential type on the Fock space in one complex
variable.
\end{abstract}

\thanks{$^*$Corresponding author.}

\maketitle

\section{Introduction}

We use the notation and normalization of \cite{Bauer2015}. Let
$$
d\mu(z)=\pi^{-1}e^{-|z|^2}dA(z),
\qquad
H^2(\C)=L^2(\C,d\mu)\cap\Hol(\C),
$$
where $dA$ is Lebesgue area measure and $\Hol(\C)$ denotes the space of
entire functions. All linear dependence is over $\C$. The reproducing
kernel at $w$ and the normalized reproducing kernel are
$$
K_w(z)=e^{z\overline w},
\qquad
k_w(z)=e^{z\overline w-|w|^2/2}.
$$

For $c>0$, let $D_c$ be the class of measurable functions $u$ such
that $ue^{-c|\cdot|^2}$ is essentially bounded. Set
$$
\Sym(\C)=\bigcap_{c>0}D_c,
\qquad
H(\C)=\bigg(\bigcup_{0<c<1/2}D_c\bigg)\cap\Hol(\C).
$$
The class $\mathcal E(\C)$ consists of the measurable functions satisfying
$|u(z)|\le Ce^{s|z|}$ almost everywhere for some $C,s>0$.
This bound gives $\mathcal E(\C)\subset\Sym(\C)$. For $u\in\Sym(\C)$,
the Toeplitz operator with symbol $u$ is $T_u=PM_u$, where $P$ is
the orthogonal projection from $L^2(\C,d\mu)$ onto $H^2(\C)$ and
$M_u$ is multiplication by $u$. The space
$H(\C)$ is a common invariant dense domain for these operators by
\cite[Section 2.2]{Bauer2015}. All operator identities below are understood
on this domain. Following \cite[Section 2.5]{Bauer2015}, we write
$$
(T_u,T_v]:=T_{uv}-T_uT_v,
\qquad
[T_u,T_v]:=T_uT_v-T_vT_u.
$$

The Berezin transform of a symbol $u\in\Sym(\C)$ is
$$
\mathcal B[u](z)=\int_{\C}u(z\pm w)\,d\mu(w).
$$
Denote by $\mathcal T$ the algebra of finite sums of finite products of
Toeplitz operators with symbols in $\Sym(\C)$. For $T\in\mathcal T$, set
$$
\mathcal B[T](w)=\langle Tk_w,k_w\rangle\quad\text{and}\quad
\langle u,v\rangle=\int_{\C}u\overline v\,d\mu.
$$

Bauer, Choe, and Koo \cite{Bauer2015} studied commuting Toeplitz operators
with pluriharmonic symbols on the Fock space. For harmonic symbols of
exponential type in one variable, they gave four alternatives
for commutativity. The example in Theorem \ref{thm:counterexample}
satisfies none of these alternatives.

The argument in Case 2 of the proof of
\cite[Proposition 4.5]{Bauer2015} does not account for cancellation
between terms with the same exponential parameters.
We replace this proposition with a criterion expressed in terms of the
polynomial coefficients. The corrected characterization follows from this
criterion and the results in \cite[Section 4]{Bauer2015}.
The equivalence between commutativity and invariance under the
Berezin transform remains valid. We recall this equivalence and
the corresponding result for semi-commutators in one complex variable.

\begin{lemma}[{\cite[Proposition 2.7]{Bauer2015}}]
\label{lem:berezin-injectivity}
Let $f,g,h,k\in\Sym(\C)\cap\Hol(\C)$. The following
equivalences hold.
\begin{enumerate}
\item[(a)]
$
(T_f,T_{\overline g}]=0
\quad\Longleftrightarrow\quad
\mathcal B[f\overline g]=f\overline g.
$
\item[(b)]
$
[T_{f+\overline k},T_{h+\overline g}]=0
\quad\Longleftrightarrow\quad
\mathcal B[f\overline g-h\overline k]
=f\overline g-h\overline k.
$
\end{enumerate}
\end{lemma}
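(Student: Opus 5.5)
The plan is to reduce both equivalences to two facts: the injectivity of the Berezin transform on the operator algebra $\mathcal T$, and an explicit computation of $\mathcal B$ on the relevant products. For $f,g\in\Sym(\C)\cap\Hol(\C)$ the first step is to show $T_{\overline g}T_f=T_{f\overline g}$. In the opposite order, one checks on the dense domain $H(\C)$ that $T_f$ is multiplication by $f$ (which preserves holomorphy) and that $T_{\overline g}$ is its formal adjoint. This gives
\[
\langle T_{\overline g}T_f k_w,k_w\rangle=\langle f k_w,g k_w\rangle=\mathcal B[f\overline g](w).
\]
For the other order we use the reproducing property $T_{\overline g}k_w=\overline{g(w)}k_w$, valid because $k_w$ lies in the domain. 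It yields
\[
\mathcal B[T_fT_{\overline g}](w)=\overline{g(w)}\langle f k_w,k_w\rangle=f(w)\overline{g(w)},
\]
using $\langle fk_w,k_w\rangle=f(w)$, which again follows from the reproducing property. Since also $\mathcal B[T_{f\overline g}]=\mathcal B[f\overline g]$ by definition, we obtain
\[
\mathcal B\big[(T_f,T_{\overline g}]\big]=\mathcal B[f\overline g]-f\overline g .
\]

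For (b), expand the commutator bilinearly. The holomorphic--holomorphic terms commute because $T_fT_h=T_{fh}=T_hT_f$, and the antiholomorphic ones commute by the adjoint version of the same identity. What remains is
\[
[T_{f+\overline k},T_{h+\overline g}]=\big(T_fT_{\overline g}-T_{\overline g}T_f\big)-\big(T_hT_{\overline k}-T_{\overline k}T_h\big).
\]
By the first step, $T_{\overline g}T_f=T_{f\overline g}$, so the first bracket equals $-(T_f,T_{\overline g}]$. The Berezin transform of the commutator is therefore
\[
f\overline g-\mathcal B[f\overline g]-h\overline k+\mathcal B[h\overline k].
\]
This vanishes exactly when $\mathcal B[f\overline g-h\overline k]=f\overline g-h\overline k$.

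The remaining ingredient, and the main obstacle, is injectivity: if $T\in\mathcal T$ and $\mathcal B[T]\equiv0$, then $T=0$ on $H(\C)$. The plan is to consider the function
\[
F(z,w)=\langle TK_w,K_z\rangle .
\]
It is holomorphic in $z$ and antiholomorphic in $w$, and its restriction to the diagonal $z=w$ is $e^{|w|^2}\mathcal B[T](w)=0$. A function of this type that vanishes on the diagonal vanishes identically, by the standard polarization argument (expand in power series in $z,\overline w$). Hence $\langle TK_w,K_z\rangle=0$ for all $z,w$. Since kernel functions span a dense subset of $H(\C)$ in a topology for which $T$ is continuous, this forces $T=0$.

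The delicate points are the growth estimates. One must justify that products of unbounded Toeplitz operators with $\Sym$ symbols act on $K_w$ and that $F$ is genuinely sesqui-holomorphic. One must also justify passing from kernels to all of $H(\C)$. For both I would appeal to the estimates of \cite[Section 2.2]{Bauer2015}, which show that $H(\C)$ is invariant and that $T_u$ acts continuously there via the integral formula $T_u\phi(z)=\int u\phi\overline{K_z}\,d\mu$.
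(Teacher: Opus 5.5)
Your proposal is correct and matches the argument behind \cite[Proposition 2.7]{Bauer2015}, which the paper quotes without proof: compute $\mathcal B[(T_f,T_{\overline g}]]=\mathcal B[f\overline g]-f\overline g$, use the decomposition $[T_{f+\overline k},T_{h+\overline g}]=(T_h,T_{\overline k}]-(T_f,T_{\overline g}]$ from \cite[Section 2.5]{Bauer2015}, and invoke injectivity of $\mathcal B$ on $\mathcal T$. For the injectivity step you can avoid the growth and density issues you flag by using the formal adjoint $T^*\in\mathcal T$ (conjugate the symbols and reverse the order): then $\langle TK_w,K_z\rangle=\overline{(T^*K_z)(w)}$ is visibly antiholomorphic in $w$, its vanishing gives $T^*K_z=0$, and hence $(T\phi)(z)=\langle \phi,T^*K_z\rangle=0$ for every $\phi\in H(\C)$.
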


Part (b) gives the equivalence of (a) and (b) in the following
theorem, which we quote as originally stated.
The counterexample in Theorem \ref{thm:counterexample} shows
that (b) does not imply (c).
\begin{theorem}[{\cite[Theorem 4.15]{Bauer2015}}]\label{thm:bauer-characterization}
Suppose $f,g,h,k\in\Hol(\C)\cap\mathcal E(\C)$. The following statements
are equivalent.
\begin{enumerate}
\item[(a)] $[T_{f+\overline k},T_{h+\overline g}]=0$.
\item[(b)] $\mathcal B[f\overline g-h\overline k]
=f\overline g-h\overline k$.
\item[(c)] One of the following four conditions holds.
\begin{enumerate}
\item[(i)] $(T_f,T_{\overline g}]=(T_h,T_{\overline k}]=0$.
\item[(ii)] $\{f+\overline k,h+\overline g,1\}$ is linearly dependent.
\item[(iii)] There are constants $a,b,c\in\C$ with $ab\ne0$ such that
$$
af+bh=c
\quad\text{and}\quad
\mathcal B[f(b\overline g+a\overline k)]
=f(b\overline g+a\overline k).
$$
\item[(iv)] There are constants $a,b,c\in\C$ with $ab\ne0$ such that
$$
ag+bk=c
\quad\text{and}\quad
\mathcal B[(\overline b f+\overline a h)\overline g]
=(\overline b f+\overline a h)\overline g.
$$
\end{enumerate}
\end{enumerate}
\end{theorem}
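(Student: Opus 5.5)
The equivalence (a)$\Leftrightarrow$(b) is exactly Lemma \ref{lem:berezin-injectivity}(b), since $\Hol(\C)\cap\mathcal E(\C)\subset\Sym(\C)\cap\Hol(\C)$. The implication (c)$\Rightarrow$(b) should be a direct verification, one alternative at a time.
\begin{itemize}
\item[(i)] By Lemma \ref{lem:berezin-injectivity}(a) both $f\overline g$ and $h\overline k$ are Berezin fixed points. Linearity of $\mathcal B$ then gives (b).
\item[(ii)] A relation $\alpha(f+\overline k)+\beta(h+\overline g)=\gamma$ with $(\alpha,\beta)\ne(0,0)$ separates, by taking holomorphic and antiholomorphic parts, into affine relations $\alpha f+\beta h=c_1$ and $\alpha k+\beta g=c_2$. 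Substituting these into $f\overline g-h\overline k$ leaves a sum of terms each of which is holomorphic or antiholomorphic. Such functions are fixed by $\mathcal B$.
\item[(iii)] Write $h=(c-af)/b$ and substitute. Then $f\overline g-h\overline k$ equals $b^{-1}f(b\overline g+a\overline k)$ minus the antiholomorphic term $(c/b)\overline k$. The hypothesis handles the first piece, and the second is fixed automatically.
\item[(iv)] This case is symmetric to (iii).
\end{itemize}

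The substantive direction is (b)$\Rightarrow$(c), and here I would follow the structure of \cite[Section 4]{Bauer2015}. Since the functions are of exponential type, $\mathcal B$ acts on products $e^{\lambda z}\overline{e^{\mu z}}$ through the formula
\[
\mathcal B\bigl[e^{\lambda z}\overline{e^{\mu z}}\bigr]=e^{\lambda\overline\mu}\,e^{\lambda z}\overline{e^{\mu z}}.
\]
More generally, the Berezin transform is $e^{\Delta/4}$ in suitable normalization, so it acts on $p(z)\overline{q(z)}e^{\lambda z+\overline{\mu z}}$ by a finite upper-triangular operator in the polynomial part. The plan has three steps.
\begin{enumerate}
\item Pass to Fourier/Borel-type representations. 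Decompose $F=f\overline g-h\overline k$ according to the exponential parameters $(\lambda,\mu)$, and impose $\mathcal B[F]=F$ on each block.
\item On each block the fixed-point condition forces $e^{\lambda\overline\mu}=1$ together with strong constraints on the polynomial coefficients; for instance, at $\lambda\overline\mu=0$ pure holomorphic/antiholomorphic behaviour is required.
\item Read off from these block conditions whether the supports of $f,h$ (or $g,k$) are linearly tied, producing (ii), (iii) or (iv), or whether each product is fixed separately, producing (i).
\end{enumerate}

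The main obstacle is the block analysis when several terms of $f\overline g$ and $h\overline k$ share the same exponential pair $(\lambda,\mu)$. Contributions from $f\overline g$ and from $h\overline k$ can then cancel, and neither product need be fixed individually. The first thing I would try is to show that any such cancellation forces an affine relation $af+bh=c$ or $ag+bk=c$ with $ab\neq0$. Granting that relation, the residual fixed-point condition has the form required in (iii) or (iv).

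I must flag that this is exactly the step the present paper asserts is invalid: the introduction says Case 2 of \cite[Proposition 4.5]{Bauer2015} overlooks this cancellation. Moreover, Theorem \ref{thm:counterexample} exhibits data satisfying (b) but none of (i)--(iv). So I expect this step to fail as stated, and the plan can only establish (a)$\Leftrightarrow$(b)$\Leftarrow$(c). A correct version of (c) would have to record the cancellation through conditions on the polynomial coefficients within each exponential block.
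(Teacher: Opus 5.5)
Your assessment agrees with the paper. The statement as quoted is false, and (a)$\Leftrightarrow$(b) is Lemma~\ref{lem:berezin-injectivity}(b). Your substitutions for (c)$\Rightarrow$(b) are correct; the paper gets (iii) and (iv) from \cite[Lemma 4.13]{Bauer2015}, which is the same computation. The step you flag is exactly where the example of Theorem~\ref{thm:counterexample} breaks the argument: $e^{z+\pi i\overline z}$ occurs in both $f\overline g$ and $h\overline k$ with Berezin multiplier $e^{\pi i}=-1$ and cancels, even though $\{f,h,1\}$ and $\{g,k,1\}$ are linearly independent. Your suggested repair is the paper's condition (v) via Proposition~\ref{prop:coefficient-criterion}.

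There is one slip in (ii). Separating holomorphic and antiholomorphic parts gives $\alpha\overline k+\beta\overline g$ constant (i.e.\ $\overline\alpha k+\overline\beta g$ constant), not $\alpha k+\beta g=c_2$. It is this form that makes the $h\overline g$ terms cancel after substitution.
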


\section{The counterexample}

Define
\begin{equation}\label{eq:symbols}
\begin{aligned}
f(z)&=e^z, & h(z)&=e^z+e^{2z},\\
k(z)&=e^{-\pi iz}, & g(z)&=e^{-\pi iz}+e^{-2\pi iz}.
\end{aligned}
\end{equation}
These functions belong to $\Hol(\C)\cap\mathcal E(\C)$, since
$|e^{az}|\le e^{|a||z|}$. The following lemma compares the two semi-commutators in the first
alternative of the original characterization.

\begin{lemma}\label{lem:nonzero-semi}
For the functions in \eqref{eq:symbols}, we have
\begin{equation}\label{eq:nonzero-semi}
(T_f,T_{\overline g}]=(T_h,T_{\overline k}]\ne0.
\end{equation}
\end{lemma}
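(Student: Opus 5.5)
The plan is to compare the two semi-commutators through their Berezin transforms. For entire $f,g\in\Sym(\C)$ we have $T_{\overline f}k_w=\overline{f(w)}\,k_w$, since $\overline f$ acts as the adjoint of multiplication by $f$ on kernels. Hence
$$
\mathcal B[T_fT_{\overline g}](w)=\langle T_{\overline g}k_w,T_{\overline f}k_w\rangle=f(w)\overline{g(w)}.
$$
This gives
$$
\mathcal B\big[(T_f,T_{\overline g}]\big]=\mathcal B[f\overline g]-f\overline g,
$$
and similarly $\mathcal B\big[(T_h,T_{\overline k}]\big]=\mathcal B[h\overline k]-h\overline k$.

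The Berezin transform is injective on $\mathcal T$: the function $(z,w)\mapsto\langle TK_z,K_w\rangle$ is holomorphic in $w$ and antiholomorphic in $z$, so it is determined by its restriction to the diagonal. This is the standard fact underlying Lemma \ref{lem:berezin-injectivity} in \cite{Bauer2015}. Using it, the equality in \eqref{eq:nonzero-semi} reduces to the symbol identity
$$
\mathcal B[f\overline g]-f\overline g=\mathcal B[h\overline k]-h\overline k .
$$
The nonvanishing reduces, by Lemma \ref{lem:berezin-injectivity}(a), to showing $\mathcal B[f\overline g]\ne f\overline g$.

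The key computation is the Berezin transform of an exponential monomial. For $a,b\in\C$ we have
$$
\mathcal B\big[e^{az}\overline{e^{bz}}\big](z)=e^{az+\overline b\overline z}\int_\C e^{aw+\overline b\overline w}\,d\mu(w)=e^{a\overline b}\,e^{az}\overline{e^{bz}},
$$
where the Gaussian integral is evaluated by the mean value property, i.e.\ $\int e^{aw}\overline{e^{bw}}\,d\mu=\langle K_{\overline a},K_{\overline b}\rangle=e^{a\overline b}$. Thus $\mathcal B-I$ multiplies each monomial by $e^{a\overline b}-1$.

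Now expand both products. First,
$$
f\overline g=e^{z}\overline{e^{-\pi iz}}+e^{z}\overline{e^{-2\pi iz}}
$$
has exponent pairs $(a,\overline b)=(1,\pi i)$ and $(1,2\pi i)$, with factors $e^{\pi i}-1=-2$ and $e^{2\pi i}-1=0$. So
$$
\mathcal B[f\overline g]-f\overline g=-2e^{z+\pi i\overline z}.
$$
Second,
$$
h\overline k=e^{z}\overline{e^{-\pi iz}}+e^{2z}\overline{e^{-\pi iz}}
$$
has pairs $(1,\pi i)$ and $(2,\pi i)$, with factors $-2$ and $e^{2\pi i}-1=0$. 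So
$$
\mathcal B[h\overline k]-h\overline k=-2e^{z+\pi i\overline z}.
$$
The two expressions agree, and neither is zero. This proves both the equality and the nonvanishing in \eqref{eq:nonzero-semi}.

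The only delicate point is justifying injectivity of $\mathcal B$ on $\mathcal T$ for operators defined only on the dense domain $H(\C)$. I would cite \cite[Section 2]{Bauer2015} for this rather than reprove it. The exponential computations themselves are routine.
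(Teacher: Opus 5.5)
Your proof is correct, but it works at the level of symbols, whereas the paper's proof of this lemma works at the level of operators.

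The paper uses only the reproducing property to get $T_{e^{bz+c\overline z}}e^{az}=e^{(a+b)(z+c)}$. From this it obtains the explicit action $(T_f,T_{\overline g}]e^{az}=(T_h,T_{\overline k}]e^{az}=-2e^{\pi ia}e^{(a+1)z}$ on every kernel function. Equal Berezin transforms then give the symbol identity. That identity is fed into Lemma \ref{lem:berezin-injectivity}(b) and the decomposition $[T_{f+\overline k},T_{h+\overline g}]=(T_h,T_{\overline k}]-(T_f,T_{\overline g}]$ to obtain equality on $H(\C)$. Nonvanishing is witnessed concretely by $(T_f,T_{\overline g}]1=-2e^z$.

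You instead compute $\mathcal B[f\overline g]-f\overline g$ and $\mathcal B[h\overline k]-h\overline k$ directly from $\mathcal B[e^{az}\overline{e^{bz}}]=e^{a\overline b}e^{az}\overline{e^{bz}}$. This is the computation the paper defers to the proof of Lemma \ref{lem:commuting}. Note that the constant $\int e^{aw}\overline{e^{bw}}\,d\mu=e^{a\overline b}$ comes from the reproducing property, not a mean value property, as your own formula $\langle K_{\overline a},K_{\overline b}\rangle=e^{a\overline b}$ shows. You then apply injectivity of $\mathcal B$ on $\mathcal T$ to the difference of the two semi-commutators.

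Your route is shorter and exposes the mechanism: the shared pair $(1,\pi i)$ is the only one with $e^{a\overline b}\ne1$. That is exactly the cancellation Proposition \ref{prop:coefficient-criterion} later makes systematic. The paper's route yields the explicit operator action and a concrete nonzero output without needing Lemma \ref{lem:berezin-injectivity}(a).

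In both proofs the operator equality ultimately rests on the injectivity of the Berezin transform from \cite{Bauer2015}. The ``delicate point'' you flag about the domain $H(\C)$ can be handled within the paper's framework. Replace your direct appeal to injectivity with Lemma \ref{lem:berezin-injectivity}(b) plus the commutator decomposition, as the paper does.
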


\begin{proof}
For $a,b,c\in\C$, the reproducing property gives
\begin{align*}
T_{e^{bz+c\overline z}}e^{az}
&=\int_{\C}e^{(a+b)w+(z+c)\overline w}\,d\mu(w)\\
&=e^{(a+b)(z+c)}.
\end{align*}
Applying this formula to \eqref{eq:symbols}, we obtain
$$
\begin{aligned}
T_{f\overline g}e^{az}
&=(-e^{\pi ia}+e^{2\pi ia})e^{(a+1)z},\\
T_fT_{\overline g}e^{az}
&=(e^{\pi ia}+e^{2\pi ia})e^{(a+1)z},\\
T_{h\overline k}e^{az}
&=-e^{\pi ia}e^{(a+1)z}+e^{\pi ia}e^{(a+2)z},\\
T_hT_{\overline k}e^{az}
&=e^{\pi ia}e^{(a+1)z}+e^{\pi ia}e^{(a+2)z}.
\end{aligned}
$$
Subtracting the corresponding expressions gives
\begin{equation}\label{eq:semi-on-exponentials}
(T_f,T_{\overline g}](e^{az})
=(T_h,T_{\overline k}](e^{az})
=-2e^{\pi ia}e^{(a+1)z}.
\end{equation}
Since $e^{az}=K_{\overline a}$, the two semi-commutators agree on
every normalized kernel and have the same Berezin transform.
It follows that
$$
\mathcal B[f\overline g]-f\overline g
=\mathcal B[h\overline k]-h\overline k,
$$
which is equivalent by linearity to
$$
\mathcal B[f\overline g-h\overline k]
=f\overline g-h\overline k.
$$
The four functions belong to $\Hol(\C)\cap\mathcal E(\C)
\subset\Hol(\C)\cap\Sym(\C)$. Lemma
\ref{lem:berezin-injectivity} (b), together with the commutator
decomposition in \cite[Section 2.5]{Bauer2015}, now yields
$$
0=[T_{f+\overline k},T_{h+\overline g}]
=(T_h,T_{\overline k}]-(T_f,T_{\overline g}].
$$
Taking $a=0$ in \eqref{eq:semi-on-exponentials} gives
$$
(T_f,T_{\overline g}]1
=(T_h,T_{\overline k}]1=-2e^z\ne0,
$$
so the common semi-commutator is nonzero. The proof is complete.
\end{proof}

We next compute the Berezin transforms of the two mixed products
explicitly and record the resulting commuting pair.
\begin{lemma}\label{lem:commuting}
For the functions in \eqref{eq:symbols}, we have
\begin{equation}\label{eq:commuting}
[T_{f+\overline k},T_{h+\overline g}]=0.
\end{equation}
Moreover, we have
$$
(T_f,T_{\overline{g}}]=(T_h,T_{\overline{k}}]\neq0.
$$
\end{lemma}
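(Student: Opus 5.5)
The plan is to compute both Berezin transforms explicitly, verify (b) of Lemma~\ref{lem:berezin-injectivity} directly, and then read off the commutator. For $b,c\in\C$, the reproducing property gives
$$
\mathcal B[e^{bz+c\overline z}](z)=\int_{\C}e^{b(z+w)+c(\overline z+\overline w)}\,d\mu(w)=e^{bz+c\overline z}\int_{\C}e^{bw+c\overline w}\,d\mu(w)=e^{bc}\,e^{bz+c\overline z}.
$$
The last step uses $\int e^{bw}\overline{e^{\overline c w}}\,d\mu=\langle K_{\overline b}\cdot,\cdot\rangle$, which equals $e^{bc}$. So each exponential $e^{bz+c\overline z}$ is an eigenfunction of $\mathcal B$ with eigenvalue $e^{bc}$.

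Next I expand the mixed products. Since $\overline g=e^{\pi i\overline z}+e^{2\pi i\overline z}$ and $\overline k=e^{\pi i\overline z}$,
$$
f\overline g-h\overline k=e^{z+\pi i\overline z}+e^{z+2\pi i\overline z}-e^{z+\pi i\overline z}-e^{2z+\pi i\overline z}=e^{z+2\pi i\overline z}-e^{2z+\pi i\overline z}.
$$
The two surviving terms both have $bc=2\pi i$, so each is fixed by $\mathcal B$ because $e^{2\pi i}=1$. The cancelled terms have $bc=\pi i$ and would have been multiplied by $-1$; this is exactly the cancellation between terms with equal exponential parameters. Hence $\mathcal B[f\overline g-h\overline k]=f\overline g-h\overline k$, and Lemma~\ref{lem:berezin-injectivity}~(b) gives \eqref{eq:commuting}.

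The same computation also shows the semi-commutator statement independently. Applying the eigenvalue formula term by term, $\mathcal B[f\overline g]-f\overline g=-2e^{z+\pi i\overline z}=\mathcal B[h\overline k]-h\overline k$. The second assertion, however, is exactly Lemma~\ref{lem:nonzero-semi}, so I would simply cite \eqref{eq:nonzero-semi}.

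The main obstacle is only bookkeeping: justifying that $\mathcal B$ may be applied term by term, which holds by linearity and absolute convergence of Gaussian integrals of exponentials, and getting the sign of the exponent $bc$ right under the normalization $\mathcal B[u](z)=\int u(z+w)\,d\mu(w)$.
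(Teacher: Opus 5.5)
Your proposal is correct and follows essentially the paper's route: both establish $\mathcal B[e^{bz+c\overline z}]=e^{bc}e^{bz+c\overline z}$, note that after the common term $e^{z+\pi i\overline z}$ cancels the two surviving terms have $bc=2\pi i$, apply Lemma \ref{lem:berezin-injectivity} (b), and cite \eqref{eq:nonzero-semi} for the semi-commutator claim. The differences are cosmetic. You derive the eigenvalue formula from the reproducing property, where your expression $\langle K_{\overline b}\cdot,\cdot\rangle$ should read $\langle K_{\overline b},K_c\rangle=K_{\overline b}(c)=e^{bc}$, while the paper cites \cite[Lemma 3.3]{Bauer2015}; and the paper computes the two defects $\mathcal B[f\overline g]-f\overline g$ and $\mathcal B[h\overline k]-h\overline k$ separately and subtracts, whereas you cancel before applying $\mathcal B$.
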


\begin{proof}
Taking kernel parameters $\overline a,b$ and constant polynomial
factors in \cite[Lemma 3.3]{Bauer2015}, we obtain
$$
\mathcal B[e^{az+b\overline z}]
=e^{ab}e^{az+b\overline z},\qquad a,b\in\C.
$$
The expansions in \eqref{eq:symbols} give
$$
\begin{aligned}
f\overline g
&=e^{z+\pi i\overline z}+e^{z+2\pi i\overline z}\quad \text{and}\quad
h\overline k
&=e^{z+\pi i\overline z}+e^{2z+\pi i\overline z}.
\end{aligned}
$$
The preceding Berezin formula multiplies the common term by
$e^{\pi i}=-1$ and each remaining term by $e^{2\pi i}=1$, which gives
\begin{equation}\label{eq:equal-defects}
\mathcal B[f\overline g]-f\overline g
=\mathcal B[h\overline k]-h\overline k
=-2e^{z+\pi i\overline z}\ne0.
\end{equation}
Subtracting these identities gives
$$
\mathcal B[f\overline g-h\overline k]
=f\overline g-h\overline k,
$$
and Lemma \ref{lem:berezin-injectivity} (b) proves
\eqref{eq:commuting}. The assertion about the semi-commutators
is \eqref{eq:nonzero-semi}. This completes the proof.
\end{proof}

It remains to show that this commuting pair satisfies none
of the original four alternatives.
\begin{theorem}\label{thm:counterexample}
The functions $f,g,h,k$ in \eqref{eq:symbols} satisfy (a) and (b) of
Theorem \ref{thm:bauer-characterization}, but none of the alternatives
in (c). More precisely, each of the families
\begin{equation}\label{eq:independent-families}
\{f,h,1\},\qquad
\{g,k,1\},\qquad
\{f+\overline k,h+\overline g,1\}
\end{equation}
is linearly independent.
\end{theorem}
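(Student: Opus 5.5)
(a) and (b) are already settled: Lemma \ref{lem:commuting} gives $[T_{f+\overline k},T_{h+\overline g}]=0$, and its proof records $\mathcal B[f\overline g-h\overline k]=f\overline g-h\overline k$. So the work is to rule out each alternative (i)--(iv) of (c). Alternative (i) fails by Lemma \ref{lem:nonzero-semi}, since the common semi-commutator is nonzero. Alternatives (iii) and (iv) each require a relation $af+bh=c$ or $ag+bk=c$ with $ab\ne0$. Such a relation would make $\{f,h,1\}$ or $\{g,k,1\}$ linearly dependent. Likewise (ii) is literally the dependence of the third family in \eqref{eq:independent-families}. It therefore suffices to prove that the three families in \eqref{eq:independent-families} are linearly independent.

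For the holomorphic families I will use the standard fact that exponentials $e^{\lambda z}$ with distinct $\lambda$ are linearly independent over $\C$. One proof is a Vandermonde argument on derivatives at $0$; another is to restrict to the real line and compare growth rates. Suppose $\alpha e^z+\beta(e^z+e^{2z})+\gamma=0$. The exponents $1,2,0$ are distinct, so $\alpha+\beta=0$, $\beta=0$ and $\gamma=0$. The same argument with exponents $-\pi i,-2\pi i,0$ handles $\{g,k,1\}$.

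For the third family, suppose
$$
\alpha\bigl(e^z+e^{\pi i\overline z}\bigr)
+\beta\bigl(e^z+e^{2z}+e^{\pi i\overline z}+e^{2\pi i\overline z}\bigr)
+\gamma=0 ,
$$
using $\overline{k(z)}=e^{\pi i\overline z}$ and $\overline{g(z)}=e^{\pi i\overline z}+e^{2\pi i\overline z}$. The functions $e^{\lambda z+\mu\overline z}$ with distinct pairs $(\lambda,\mu)$ are linearly independent. I would check this by applying $\partial_{\overline z}$ to kill the holomorphic part, or by treating $z$ and $\overline z$ as independent variables, since a real-analytic identity in $z,\overline z$ extends to one in $(z,w)\in\C^2$. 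The pairs occurring here are $(1,0)$, $(2,0)$, $(0,\pi i)$, $(0,2\pi i)$ and $(0,0)$. Reading off coefficients gives $\beta=0$ from $e^{2z}$, then $\alpha=0$ from $e^z$, then $\gamma=0$.

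The main obstacle is bookkeeping rather than depth. One must make sure that the reduction of (iii) and (iv) to dependence of $\{f,h,1\}$ and $\{g,k,1\}$ is airtight; this uses only $ab\ne0$, which makes the relation nontrivial. One must also justify the independence of mixed exponentials in $z$ and $\overline z$. For the latter, the quickest route is to apply $\partial_{\overline z}$ twice. The identity reduces to $\pi i(\alpha+\beta)e^{\pi i\overline z}+2\pi i\beta e^{2\pi i\overline z}=0$ and then to $2\pi^2\beta e^{2\pi i\overline z}=0$ (up to sign), which gives $\beta=0$. Then $\alpha=0$ follows, and finally $\gamma=0$.
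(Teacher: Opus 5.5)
Your proof is correct and follows the paper's argument: (a), (b) and the failure of (c)(i) come from Lemmas \ref{lem:nonzero-semi} and \ref{lem:commuting}, alternatives (ii)--(iv) are reduced to the independence of the three families, and the third family is handled by differentiating away half of the terms. One small slip: applying $\partial_{\overline z}$ twice literally gives $-\pi^2(\alpha+\beta)e^{\pi i\overline z}-4\pi^2\beta e^{2\pi i\overline z}=0$, so isolating $\beta$ needs $(\partial_{\overline z}-\pi i)\partial_{\overline z}$ (or the independence of $e^{\pi i\overline z}$ and $e^{2\pi i\overline z}$); the paper instead differentiates in $z$ and reads off $\alpha+\beta=2\beta=0$ directly from $(\alpha+\beta)e^z+2\beta e^{2z}=0$.
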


\begin{proof}
Lemma \ref{lem:commuting} gives (a), and subtracting the identities
in \eqref{eq:equal-defects} gives (b). Alternative (c)(i) is
excluded by \eqref{eq:nonzero-semi}.
By \eqref{eq:symbols}, the independence of $1,e^z,e^{2z}$ and of
$1,e^{-\pi iz},e^{-2\pi iz}$ gives the independence of
$\{f,h,1\}$ and $\{g,k,1\}$, respectively.
To treat the third family, suppose that
$$
a(f+\overline k)+b(h+\overline g)+c=0.
$$
Differentiation with respect to $z$ eliminates the antiholomorphic
terms and gives
$$
(a+b)e^z+2be^{2z}=0.
$$
The independence of $e^z,e^{2z}$ implies $a+b=2b=0$.
Thus $a=b=0$, and the original relation gives $c=0$.
The first two independence assertions in
\eqref{eq:independent-families} exclude (c)(iii) and (c)(iv),
while the third excludes (c)(ii). The proof is complete.
\end{proof}

Following \cite{Bauer2015}, let $\mathcal P_1$ denote the algebra of
holomorphic polynomials in one variable. Let $\mathcal A_1$ be the
algebra generated by $\mathcal P_1$ and the reproducing kernels. Thus
$$
\mathcal A_1
=\left\{\sum_{j=0}^{J}p_jK_{a_j}:
J\ge0,\ p_j\in\mathcal P_1,\ a_j\in\C\right\}.
$$
After collecting terms with the same kernel parameter, each
$u\in\mathcal A_1$ has a unique decomposition
$$
 u=P_u+E_u,\quad
 E_u=\sum_{j=1}^{N}p_jK_{a_j},
$$
where $P_u\in\mathcal P_1$, the parameters $a_j$ are distinct
and nonzero, and the polynomials $p_j$ are nonzero. The polynomial
part $P_u$ is the term with kernel parameter zero, taken to be zero
if that parameter is absent. The remaining sum $E_u$ is the
nonpolynomial part of $u$, and it is empty when $u$ is a polynomial.
This is the decomposition used in the proof of
\cite[Proposition 4.5]{Bauer2015}.

In Case 2 of that proof, both pairs
$\{E_f,E_h\}$ and $\{E_g,E_k\}$ are assumed to be linearly
independent over $\mathbb C$. From
$$
 \mathcal B[E_f\overline{E_g}+E_h\overline{E_k}]
 =E_f\overline{E_g}+E_h\overline{E_k},
$$
the authors infer that
$$
 \mathcal B[E_f\overline{E_g}]=E_f\overline{E_g}\quad\text{and}\quad
 \mathcal B[E_h\overline{E_k}]=E_h\overline{E_k}.
$$
For the four functions $(f,g,-h,k)$ from \eqref{eq:symbols},
all polynomial parts vanish and both pairs of nonpolynomial parts
are linearly independent. However, \eqref{eq:equal-defects} shows
that neither summand is fixed by $\mathcal B$, so this inference fails.
The term $e^{z+\pi i\overline z}$ occurs in both
$f\overline g$ and $h\overline k$ and cancels in their
difference. Linear independence of the two pairs does not
exclude this overlap.

The statement of \cite[Proposition 4.5]{Bauer2015} also fails
for these functions. Alternative (i) is excluded by
\eqref{eq:equal-defects}. To exclude alternative (ii), set
$$
 u=f\overline g-h\overline k
 =e^{z+2\pi i\overline z}-e^{2z+\pi i\overline z}.
$$
Suppose that $u-A\overline B$ is harmonic for some entire
functions $A,B$, and define
$$
 B^*(w)=\overline{B(\overline w)}.
$$
Applying $\partial_z\partial_{\overline z}$ to the harmonic remainder gives
$$
\partial_z\partial_{\overline z}u
=2\pi i u=A'(z)\overline{B'(z)}.
$$
Both sides extend to entire functions of $(z,w)$, so
\cite[Lemma 2.9]{Bauer2015} gives
$$
 2\pi i\bigl(e^{z+2\pi iw}-e^{2z+\pi iw}\bigr)
 =A'(z)(B^*)'(w).
$$
Taking $w=0$ and $w=1$, respectively, yields
$$
\begin{aligned}
2\pi i(e^z-e^{2z})&=A'(z)(B^*)'(0)\quad\text{and}\quad
2\pi i(e^z+e^{2z})&=A'(z)(B^*)'(1).
\end{aligned}
$$
The two functions on the left are linearly independent, whereas
both functions on the right are scalar multiples of $A'$. This contradiction excludes such a
representation even without any growth restriction on $A,B$.
It therefore also excludes alternative (ii) of
\cite[Corollary 4.11]{Bauer2015}. Alternative (i) again fails
by \eqref{eq:equal-defects}.

The same example also contradicts \cite[Theorem 4.14]{Bauer2015}.
Its first alternative fails by \eqref{eq:equal-defects},
and its second fails because
$\partial_z\partial_{\overline z}u=2\pi i u\ne0$.
The remaining alternatives require a nontrivial affine
relation within $\{f,h\}$ or $\{g,k\}$.
Since all polynomial parts vanish, such a relation would
give a linear dependence of the corresponding nonpolynomial
parts, contrary to their independence.

\section{Differences of two products of exponential polynomials}

In this section, we replace \cite[Proposition 4.5]{Bauer2015}
with a criterion expressed in the polynomial coefficients of
$f\overline g-h\overline k$, where $f,g,h,k$ are arbitrary functions
in $\mathcal A_1$. The corresponding criterion for
$f\overline g+h\overline k$ in Bauer's equation follows by
replacing $h$ with $-h$.
As in \cite[Section 3]{Bauer2015}, for $a,b\in\C$ and
$q\in\mathcal P_1$, put
$$
K_{a,b}=K_a\overline{K_b}\quad\text{and}\quad
q^*(w)=\overline{q(\overline w)}.
$$
So $K_{a,b}(z)=e^{\overline a z+b\overline z}$ and
$q^*(\overline z)=\overline{q(z)}$.
We replace $\overline z$ by an independent complex variable $w$
when comparing coefficients and taking partial derivatives.
We write $\C[z,w]$ for the polynomials in these two variables.
On $\C[z,w]$, the derivatives $\partial_z$ and $\partial_w$ act on
the independent variables $z$ and $w$.
Setting $w=\overline z$ recovers the original function.
For $R\in\C[z,w]$, we denote this restriction by
$R(\cdot,\overline{\cdot})$, namely $z\mapsto R(z,\overline z)$.
The chain rule gives
\begin{equation}\label{E13}
\partial_{\overline z}[R(z,\overline z)]=(\partial_wR)(z,\overline z)
\quad\text{and}\quad\partial_z\partial_{\overline z}[R(z,\overline z)]= (\partial_z\partial_wR)(z,\overline z).
\end{equation}
Note that derivatives on the right are taken in the independent variables
$z,w$ before setting $w=\overline z$, and those on the left
act on the restricted function $z\mapsto R(z,\overline z)$.

For $f,g,h,k\in\mathcal A_1$, write
\begin{equation}\label{eq:separate-expansions}
\begin{aligned}
f&=\sum_{j=0}^{J}p_jK_{a_j},&
h&=\sum_{m=0}^{M}r_mK_{c_m},\\
g&=\sum_{\ell=0}^{L}q_\ell K_{b_\ell},&
k&=\sum_{n=0}^{N}s_nK_{d_n},
\end{aligned}
\end{equation}
where all coefficients belong to $\mathcal P_1$. We choose the four lists of kernel parameters separately, requiring distinct entries within each list but allowing the same parameter to occur in different lists. We take
$a_0=b_0=c_0=d_0=0$ and allow the corresponding coefficients to vanish.
The polynomial parts are $P_f=p_0$, $P_h=r_0$, $P_g=q_0$,
and $P_k=s_0$. The remaining terms form $E_f,E_h,E_g,E_k$,
respectively, with polynomial coefficients that need not be constant.

The parameter pairs in the two products form the finite set
$$
\begin{aligned}
\Lambda={}&\{(a_j,b_\ell):0\le j\le J,\ 0\le\ell\le L\}\\
&{}\cup\{(c_m,d_n):0\le m\le M,\ 0\le n\le N\}.
\end{aligned}
$$
For $(a,b)\in\Lambda$, we combine the coefficients at this pair and set
\begin{equation}\label{eq:merged-coefficients}
C_{a,b}(z,w)
=\sum_{\substack{a_j=a\\b_\ell=b}}p_j(z)q_\ell^*(w)
-\sum_{\substack{c_m=a\\d_n=b}}r_m(z)s_n^*(w).
\end{equation}
Empty sums are understood to be zero. The parameters in each list are distinct, so each sum contains
at most one term. Expanding the two products, collecting terms with the same
parameter pair, and setting $w=\overline z$ gives
\begin{equation}\label{eq:merged-expansion}
f\overline g- h\overline k
=\sum_{(a,b)\in\Lambda}
C_{a,b}(z,\overline z)K_{a,b}(z).
\end{equation}
We say that a family $C=(C_{a,b})_{(a,b)\in\Lambda}$ satisfies
condition \eqref{eq:coefficient-condition} if every
$(a,b)\in\Lambda$ with $C_{a,b}\not\equiv0$ satisfies one of
the following conditions:
\begin{equation*}\label{eq:coefficient-condition}\tag{C}
\begin{aligned}
\text{(i)}\quad
& ab=0,\quad \overline a b\in2\pi i\mathbb Z, \quad C_{a,b}(z,\overline z)K_{a,b}(z)
  \text{ is harmonic on }\C.\\[6pt]
\text{(ii)}\quad
& ab\ne0,\quad \overline a b\in2\pi i\mathbb Z, \quad C_{a,b}(z,\overline z)
  =c_{a,b}+d_{a,b}(\overline a z-b\overline z).
\end{aligned}
\end{equation*}
In (ii), there are constants $c_{a,b},d_{a,b}\in\C$, depending
only on $(a,b)$.
Here $\mathbb Z$ denotes the set of integers, and
$$
 2\pi i\mathbb Z=\{2\pi i k:k\in\mathbb Z\}.
$$
By \cite[Lemma 2.9]{Bauer2015}, the polynomial identity in the
second case is equivalent to the one obtained by replacing
$\overline z$ with an independent variable $w$.

The following proposition characterizes invariance under the
Berezin transform in terms of these combined coefficients $C$.

\begin{proposition}[Replacement for {\cite[Proposition 4.5]{Bauer2015}}]
\label{prop:coefficient-criterion}
Let $f,g,h,k\in\mathcal A_1$ have the representations
\eqref{eq:separate-expansions}. Then
$$
\mathcal B[f\overline g- h\overline k]
=f\overline g- h\overline k
$$
if and only if the family $C=(C_{a,b})_{(a,b)\in\Lambda}$
in \eqref{eq:merged-coefficients} satisfies
\eqref{eq:coefficient-condition}.
\end{proposition}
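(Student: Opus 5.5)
The plan is to compute $\mathcal B$ term by term on \eqref{eq:merged-expansion}, reduce invariance to a separate condition on each pair $(a,b)\in\Lambda$, and then solve that condition using the special form of $C_{a,b}$.

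\emph{Step 1 (Berezin transform of one term).} From \cite[Lemma 3.3]{Bauer2015}, $\mathcal B$ acts on $R(z,\overline z)K_{a,b}(z)$, with $R\in\C[z,w]$, as the heat operator $e^{\partial_z\partial_{\overline z}}$. Passing to the independent variable $w$ via \eqref{E13} and conjugating the exponential past the derivatives, I expect
$$
\mathcal B[R(\cdot,\overline{\cdot})K_{a,b}]=e^{\overline a b}\,\bigl(e^{D_{a,b}}R\bigr)(z,\overline z)\,K_{a,b}(z),\qquad D_{a,b}=\partial_z\partial_w+b\,\partial_z+\overline a\,\partial_w .
$$
The operator $D_{a,b}$ lowers total degree, so it is nilpotent on $\C[z,w]$ and $e^{D_{a,b}}$ is a finite sum.

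\emph{Step 2 (separation).} Consider functions of the form $\sum_{(a,b)}R_{a,b}(z,\overline z)K_{a,b}$ with distinct pairs $(a,b)$. By \cite[Lemma 2.9]{Bauer2015} such a sum vanishes only if each $R_{a,b}e^{\overline a z+bw}$ vanishes as an entire function of $(z,w)$. This is linear independence of exponential polynomials in two variables, since distinct pairs give distinct exponents $(\overline a,b)$.

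Hence $\mathcal B[f\overline g-h\overline k]=f\overline g-h\overline k$ holds if and only if $e^{\overline a b}e^{D_{a,b}}C_{a,b}=C_{a,b}$ for every $(a,b)\in\Lambda$.

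Write $e^{D}=I+N$, where $N=D\,(I+D/2+\cdots)$ is nilpotent.
\begin{itemize}
\item If $e^{\overline a b}\ne1$, the operator $(e^{\overline ab}-1)I+e^{\overline ab}N$ is invertible, so $C_{a,b}=0$.
\item Otherwise $\overline ab\in2\pi i\mathbb Z$, and the condition reduces to $NC_{a,b}=0$, that is, $D_{a,b}C_{a,b}=0$.
\end{itemize}
We also have $D_{a,b}=(\partial_z+\overline a)(\partial_w+b)-\overline ab$. So when $ab=0$, the equation $D_{a,b}C=0$ says exactly that $\partial_z\partial_w(Ce^{\overline az+bw})=0$. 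By \eqref{E13} this is harmonicity of $C_{a,b}(z,\overline z)K_{a,b}$, which gives (C)(i).

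\emph{Step 3 (the case $ab\ne0$).} This is the main obstacle. Use the coordinates $u=\overline az-bw$ and $v=\overline az+bw$. A direct computation gives $D_{a,b}=\overline ab\,(\partial_v^2+2\partial_v-\partial_u^2)$. The converse direction is then immediate: $c+d\,u$ lies in the kernel.

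The forward direction cannot follow from $D_{a,b}C=0$ alone. For example, $v+u^2$ is a nonzero kernel element of degree $2$. The extra input must be the structure of \eqref{eq:merged-coefficients}: $C_{a,b}$ is a sum of at most two products $p(z)q^*(w)$, so it has rank at most $2$ as a bilinear form in monomials.

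I would first expand a kernel element as $C=\sum_k\varphi_k(u)v^k$, with $m$ the top $v$-degree. The coefficient of $v^{k-1}$ in the equation gives the recursion
$$
k(k+1)\varphi_{k+1}+2k\varphi_k=\varphi_{k-1}''.
$$
Together with $\varphi_m''=0$, this forces the $u$-degrees to grow by $2$ at each step downward. So any kernel element of total degree $\ge2$ contains a nonzero component of degree $\ge2$ that is essentially a polynomial in $u$ alone. The highest homogeneous part is then $\lambda(\overline az-bw)^d$ plus terms arising from lower $v$-powers.

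The plan is to compute the rank of the top homogeneous component in the monomial basis $\{z^iw^j\}$. For $d\ge2$, the top component $\lambda(\overline az-bw)^d$ has all $d+1$ binomial coefficients nonzero (since $ab\ne0$), so its coefficient matrix is anti-diagonal with full rank $d+1\ge3$. I then need to show this rank bound passes to $C$ itself; taking top-degree parts should not increase rank, but I must check this carefully. The resulting contradiction with rank $\le2$ would force $d\le1$, i.e. $C_{a,b}=c_{a,b}+d_{a,b}(\overline az-b\overline z)$.

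I expect the delicate points to be the precise rank bookkeeping and the exclusion of degree-$2$ kernel elements such as $v+u^2$ that happen to have low rank.
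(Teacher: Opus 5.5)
Your proposal follows the paper's proof essentially step for step. It uses the same operator $D_{a,b}$ and the same transform formula, and it arrives at the same blockwise equation $e^{\overline ab}e^{D_{a,b}}C_{a,b}=C_{a,b}$. It then makes the same nilpotent reduction to $D_{a,b}C_{a,b}=0$. It ends with the same contradiction: the bound $\operatorname{rank}\le2$ coming from \eqref{eq:merged-coefficients} against the full rank of the top homogeneous part $\lambda(\overline az-bw)^d$.

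The one step you leave unverified is that the rank bound passes from the top part to $C_{a,b}$. It is true, and it is exactly what the paper supplies. Since $\deg C_{a,b}=d$, the coefficients $t_{rs}$ of $C_{a,b}$ vanish for $r+s>d$. So reversing the columns of $(t_{rs})_{0\le r,s\le d}$ gives an upper triangular matrix whose diagonal entries $t_{r,d-r}=\lambda\binom dr\overline a^{\,r}(-b)^{d-r}$ are all nonzero. Hence $\operatorname{rank}(t_{rs})=d+1$, and the bound $\operatorname{rank}\le2$ forces $d\le1$. More generally, an upper triangular matrix has rank at least the number of its nonzero diagonal entries (take the principal submatrix on those indices), so passing to the top-degree part never increases rank.

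This also removes your worry about $v+u^2$. Its column-reversed coefficient matrix has diagonal $b^2,\,-2\overline ab,\,\overline a^{\,2}$, so its rank is $3$. No element of $\ker D_{a,b}$ of degree at least $2$ can have rank at most $2$.

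Your expansion in powers of $v$ is more than you need. Compare the terms of degree $d-1$ in $D_{a,b}C=0$, using $D_{a,b}=\overline ab(\partial_v^2-\partial_u^2+2\partial_v)$. This gives $\partial_vC_d=0$ at once, equivalently $(b\partial_z+\overline a\partial_w)C_d=0$, which forces $C_d=\lambda(\overline az-bw)^d$. This is the paper's route; it solves that first-order equation by a coefficient recurrence. Note also that the top homogeneous part is exactly $\lambda u^d$, with no additional terms from other powers of $v$.
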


\begin{proof}
Set $F=f\overline g-h\overline k$. For $a,b\in\C$, put
\begin{equation}\label{E1}
D_{a,b}=\partial_z\partial_w+b\partial_z
+\overline a\partial_w.
\end{equation}
The operator $D_{a,b}$ acts on $\C[z,w]$. For each integer $m\ge0$,
set
$$
V_m=
\bigg\{
\sum_{\substack{r,s\ge0\\r+s\le m}}c_{rs}z^rw^s:
c_{rs}\in\C
\bigg\}.
$$
Then $V_m$ consists of the polynomials of total degree at most $m$,
including the zero polynomial. The total degree of $z^rw^s$
is $r+s$. Let $I$ denote the identity operator on $V_m$, so
$IR=R$ for every $R\in V_m$. For $\nu\ge1$, the notation
$D_{a,b}^{\nu}$ means the composition of $\nu$ copies of $D_{a,b}$,
and $D_{a,b}^{0}=I$.

For a polynomial $R$ of total degree $m$, each nonzero first partial derivative has total degree at most $m-1$. Its mixed derivative $\partial_z\partial_wR$ has total degree at most $m-2$ when nonzero. So we obtain $D_{a,b}(V_0)=\{0\}$ and
$$
D_{a,b}(V_j)\subseteq V_{j-1}
$$
for $j\ge1$.
For $R\in V_m$, repeated application yields
$$
D_{a,b}^{\nu}R\in V_{m-\nu}
$$
for $0\le\nu\le m.$
In particular, $D_{a,b}^{m}R$ is constant. Applying $D_{a,b}$ once more, we obtain
\begin{equation}\label{E2}
D_{a,b}^{m+1}R=0
\end{equation}
for $R\in V_m.$
Thus, $D_{a,b}^{m+1}$ is the zero operator on $V_m$.
This also covers $m=0$, since $D_{a,b}$ annihilates constants.
On $V_m$, the operator exponential is therefore the finite sum
\begin{equation}\label{E3}
e^{D_{a,b}}R
:=\sum_{\nu=0}^{\infty}\frac{D_{a,b}^{\nu}R}{\nu!}
=\sum_{\nu=0}^{m}\frac{D_{a,b}^{\nu}R}{\nu!},
\qquad R\in V_m.
\end{equation}

For $S\in V_m$, Taylor's formula gives
$$
\bigl(e^{b\partial_z}S\bigr)(z,w)
=\sum_{j=0}^{m}\frac{b^j}{j!}(\partial_z^jS)(z,w)
=S(z+b,w).
$$
Similarly, in the variable $w$,
$e^{\overline a\,\partial_w}$ translates $w$ by $\overline a$.
For $R\in V_m$, the corresponding finite expansion for the mixed
derivative is
$$
e^{\partial_z\partial_w}R
=\sum_{\nu=0}^{m}\frac{(\partial_z\partial_w)^\nu R}{\nu!}
=\sum_{\nu=0}^{m}\frac{\partial_z^\nu\partial_w^\nu R}{\nu!}.
$$
The three summands of $D_{a,b}$ commute, so multiplying these
finite expansions gives
\begin{align}\label{EEE2}
\bigl(e^{D_{a,b}}R\bigr)(z,w)
&=\bigl(e^{b\partial_z}e^{\overline a\,\partial_w}
e^{\partial_z\partial_w}R\bigr)(z,w)\nonumber\\
&=\sum_{\nu=0}^{m}\frac{1}{\nu!}
\bigl(\partial_z^\nu\partial_w^\nu R\bigr)
(z+b,w+\overline a).
\end{align}

For $R(z,w)=p(z)q^*(w)$ with $p,q\in\mathcal P_1$,
the sum in \eqref{EEE2} is the polynomial factor in
\cite[Lemma 3.3]{Bauer2015}. For a general polynomial
$R(z,w)=\sum_{r+s\le m}\kappa_{rs}z^rw^s\in V_m$, write
$$
H(z)=R(z,\overline z)K_{a,b}(z)
=\sum_{r+s\le m}\kappa_{rs}(z^rK_a(z))\overline{z^sK_b(z)}.
$$
By linearity of the Berezin transform, we have
$$
\mathcal B[H](z)=\sum_{r+s\le m}\kappa_{rs}
\mathcal B\bigl[(\cdot)^rK_a\,\overline{(\cdot)^sK_b}\bigr](z).
$$
For each summand, \cite[Lemma 3.3]{Bauer2015}, with $p(z)=z^r$ and $q(z)=z^s$, gives
\begin{align*}
\mathcal B\bigl[(\cdot)^rK_a\,\overline{(\cdot)^sK_b}\bigr](z)
={}&e^{\overline a b}K_{a,b}(z)\\
&{}\times\left[(\partial_z+w+\overline a)^s(z+b)^r\right]_{w=\overline z}.
\end{align*}
For $r+s\le m$, the operator $\partial_z$ commutes with
multiplication by $w+\overline a$, so the binomial formula yields
\begin{align*}
(\partial_z+w+\overline a)^s(z+b)^r
&=\sum_{\nu=0}^{s}\binom{s}{\nu}(w+\overline a)^{s-\nu}
\partial_z^\nu(z+b)^r\\[3pt]
&=\sum_{\nu=0}^{\min(r,s)}\frac{1}{\nu!}
\frac{r!}{(r-\nu)!}\frac{s!}{(s-\nu)!}
(z+b)^{r-\nu}(w+\overline a)^{s-\nu}\\[3pt]
&=\sum_{\nu=0}^{m}\frac{
\bigl[\partial_z^\nu\partial_w^\nu(z^rw^s)\bigr]
(z+b,w+\overline a)}{\nu!}.
\end{align*}
The last equality uses the vanishing of the derivatives for
$\nu>\min(r,s)$.
We substitute this expansion into the formula for $\mathcal B[H]$
and interchange the finite sums to obtain
\begin{align*}
\mathcal B[R(\cdot,\overline{\cdot})K_{a,b}](z)
={}&e^{\overline a b}K_{a,b}(z)\\[-2pt]
&{}\times\sum_{\nu=0}^{m}\frac{(\partial_z^\nu\partial_w^\nu R)
(z+b,\overline z+\overline a)}{\nu!}.
\end{align*}
Using \eqref{EEE2}, we can write this identity as
\begin{equation}\label{eq:polynomial-block-transform}
\mathcal B[R(\cdot,\overline{\cdot})K_{a,b}](z)
=e^{\overline a b}K_{a,b}(z)
\bigl(e^{D_{a,b}}R\bigr)(z,\overline z).
\end{equation}

For sufficiency, assume \eqref{eq:coefficient-condition} holds.
The terms in \eqref{eq:merged-expansion} with $ab=0$ are harmonic
and hence fixed by $\mathcal B$.
For $ab\ne0$ and $C_{a,b}\ne0$, complexification of the identity
in \eqref{eq:coefficient-condition} gives
$$
C_{a,b}(z,w)=c_{a,b}+d_{a,b}(\overline a z-bw).
$$
Condition \eqref{eq:coefficient-condition} also implies
$e^{\overline a b}=1$. Substituting the expression for $C_{a,b}$
into \eqref{E1}, we obtain
$$
D_{a,b}C_{a,b}=bd_{a,b}\overline a-\overline a d_{a,b}b=0.
$$
Thus \eqref{E3} reduces to $e^{D_{a,b}}C_{a,b}=C_{a,b}$, and
\eqref{eq:polynomial-block-transform} shows that the corresponding
term in \eqref{eq:merged-expansion} is also fixed.
The terms with zero coefficients vanish, so linearity and
\eqref{eq:merged-expansion} give $\mathcal B[F]=F$.

For necessity, assume $\mathcal B[F]=F$. We apply $\mathcal B$ to
\eqref{eq:merged-expansion} and use
\eqref{eq:polynomial-block-transform} to obtain an identity on
$w=\overline z$. Each term extends to an entire function of
$(z,w)$, so \cite[Lemma 2.9]{Bauer2015} gives
$$
\sum_{(a,b)\in\Lambda}
\bigl(e^{\overline a b}e^{D_{a,b}}C_{a,b}
-C_{a,b}\bigr)(z,w)e^{\overline a z+bw}=0.
$$
Since the pairs in $\Lambda$ are distinct, the corresponding
exponentials are linearly independent over $\C[z,w]$ by the argument
in the proof of
\cite[Theorem 3.5]{Bauer2015}. Comparing coefficients gives
\begin{equation}\label{eq:block-equation}
e^{\overline a b}e^{D_{a,b}}C_{a,b}=C_{a,b}
\end{equation}
for $(a,b)\in\Lambda.$

Fix $(a,b)\in\Lambda$ with $C_{a,b}\ne0$. Set
$m=\deg C_{a,b}$ and $D=D_{a,b}$.
By \eqref{E3}, the terms of degree $m$ in $e^DC_{a,b}$
are the same as those in $C_{a,b}$, since every positive power of $D$
lowers total degree. Comparing the terms of degree $m$ in
\eqref{eq:block-equation}, we find $e^{\overline a b}=1$.
Combining this equality with \eqref{eq:block-equation} gives
\begin{equation}\label{E4}
\overline a b\in2\pi i\mathbb Z\quad\text{and}\quad
 e^DC_{a,b}=C_{a,b}.
\end{equation}
To reduce the second relation in \eqref{E4} to an equation involving
$D$, define on $V_m$
$$
N_D=\sum_{\nu=1}^{m}\frac{D^\nu}{(\nu+1)!}.
$$
The sum is zero when $m=0$. Since $D^{m+1}=0$ on $V_m$
by \eqref{E2}, expanding the operator exponential in \eqref{E3} gives
$$
e^D-I=D(I+N_D)=(I+N_D)D
$$
on $V_m$.
The second equality holds because $N_D$ is a polynomial in $D$.
Each term of $N_D$ contains a factor $D$, so $N_D^{m+1}=0$ on $V_m$.
Applying
\cite[Section 8A]{Axler2024} to $N_D$, we obtain
$$
(I+N_D)^{-1}=\sum_{j=0}^{m}(-N_D)^j.
$$
For $C_{a,b}$, this factorization takes the form
$$
e^DC_{a,b}-C_{a,b}=(I+N_D)DC_{a,b}.
$$
Since $I+N_D$ is invertible, we obtain
\begin{equation}\label{eq:nilpotent-condition}
e^DC_{a,b}=C_{a,b}
\quad\Longleftrightarrow\quad
DC_{a,b}=0.
\end{equation}
Combining \eqref{E4} and \eqref{eq:nilpotent-condition} gives
\begin{equation}\label{E5}
D_{a,b}C_{a,b}=0
\end{equation}
for $(a,b)\in\Lambda.$
The latter equation also holds when $C_{a,b}=0$.

If $ab=0$, the product rule and \eqref{E13}, together with
\eqref{E1}, give
\begin{align*}
\partial_z\partial_{\overline z}\bigl[C_{a,b}(z,\overline z)K_{a,b}(z)\bigr]
&=K_{a,b}(z)
\bigl(D_{a,b}C_{a,b}+\overline a b\,C_{a,b}\bigr)(z,\overline z)\\
&=K_{a,b}(z)\bigl(D_{a,b}C_{a,b}\bigr)(z,\overline z)=0,
\end{align*}
where the last two equalities use $\overline a b=0$ and
\eqref{E5}, respectively.
This term is harmonic, as required in the first case of
\eqref{eq:coefficient-condition}.

It remains to consider $(a,b)\in\Lambda$ with $ab\ne0$ and
$R=C_{a,b}\ne0$. If $R$ is constant, the second case in
\eqref{eq:coefficient-condition} holds with $d_{a,b}=0$.
Otherwise, let $m=\deg R\ge1$ and denote the homogeneous part
of degree $m$ by $R_m$. By \eqref{E5}, $D_{a,b}R=0$.
To compare the terms of degree $m-1$, use \eqref{E1} to write
$$
D_{a,b}R=(b\partial_z+\overline a\partial_w)R_m
+\partial_z\partial_wR_m+D_{a,b}(R-R_m).
$$
The last two terms have total degree at most $m-2$ when nonzero.  In particular, the last two terms both vanish when $m=1$.
Comparing the terms of degree $m-1$ in $D_{a,b}R=0$ yields
\begin{equation}\label{E6}
(b\partial_z+\overline a\partial_w)R_m=0.
\end{equation}
Since $R_m$ is homogeneous of degree $m$, write
$$
R_m(z,w)=\sum_{j=0}^{m}\lambda_jz^jw^{m-j},
$$
where $\lambda_0,\ldots,\lambda_m\in\C$. Substituting this
expression into \eqref{E6} gives
$$
\sum_{j=0}^{m-1}
\bigl[b(j+1)\lambda_{j+1}
+\overline a(m-j)\lambda_j\bigr]
z^jw^{m-j-1}=0.
$$
Comparing coefficients and using $b\ne0$, we obtain
$$
\lambda_{j+1}
=-\frac{\overline a}{b}\frac{m-j}{j+1}\lambda_j,
\quad 0\le j\le m-1.
$$
For $1\le j\le m$, iterating the recurrence yields
$$
\begin{aligned}
\lambda_j
&=\lambda_0\left(-\frac{\overline a}{b}\right)^j
  \prod_{\ell=0}^{j-1}\frac{m-\ell}{\ell+1}=\lambda_0\binom mj
  \left(-\frac{\overline a}{b}\right)^j.
\end{aligned}
$$
If $\lambda_0=0$, the recurrence would give $\lambda_j=0$
for every $j=0,\ldots,m$, contrary to $R_m\not\equiv0$.
Thus $\lambda_0\ne0$. By the binomial formula,
$$
\begin{aligned}
R_m(z,w)
&=\lambda_0\sum_{j=0}^{m}\binom mj
  \left(-\frac{\overline a}{b}z\right)^jw^{m-j}\\
&=\lambda_0\left(w-\frac{\overline a}{b}z\right)^m.
\end{aligned}
$$
With $\gamma=\lambda_0/(-b)^m\in\C\setminus\{0\}$, we obtain
\begin{equation}\label{E7}
R_m(z,w)=\gamma(\overline a z-bw)^m.
\end{equation}

To bound $m$, write
\begin{equation}\label{E8}
R(z,w)=\sum_{r,s=0}^{m}t_{rs}z^rw^s,
\qquad M=(t_{rs})_{r,s=0}^{m}.
\end{equation}
For $0\le r,s\le m$ with $r+s\ge m$, the equality $\deg R=m$
and \eqref{E7} give
\begin{equation}\label{E9}
t_{rs}=
\begin{cases}
0, & r+s>m,\\[2pt]
\displaystyle
\gamma\binom mr\overline a^{\,r}(-b)^{m-r}\ne0,
& r+s=m.
\end{cases}
\end{equation}
The second expression is nonzero because $\gamma\ne0$ and $ab\ne0$.
Let $\widehat M=(t_{r,m-s})_{r,s=0}^{m}$ be obtained by reversing
the columns of $M$. The subscript $rs$ denotes the entry with
row and column indices $r,s$. Since
$$
r>s\ \Longrightarrow\ r+m-s>m,
$$
equation \eqref{E9} gives
$$
(\widehat M)_{rs}=t_{r,m-s}=0
\qquad(r>s).
$$
Thus
$$
\widehat M=
\begin{pmatrix}
t_{0,m} & t_{0,m-1} & \cdots & t_{0,0}\\
0       & t_{1,m-1} & \cdots & t_{1,0}\\
\vdots  & \ddots    & \ddots & \vdots\\
0       & \cdots    & 0      & t_{m,0}
\end{pmatrix},
$$
and
$$
\det\widehat M
=\prod_{r=0}^{m}t_{r,m-r}
=\prod_{r=0}^{m}
\left[\gamma\binom mr\overline a^{\,r}(-b)^{m-r}\right]
\ne0.
$$
Reversing columns preserves rank, so
\begin{equation}\label{E10}
\operatorname{rank}M
=\operatorname{rank}\widehat M
=m+1.
\end{equation}

We next bound $\operatorname{rank}M$ using the representation of
$R=C_{a,b}$ in \eqref{eq:merged-coefficients}.
Since the parameters within each list are distinct, the relations
$a_{j_0}=a$ and $b_{\ell_0}=b$ determine at most one pair
$(j_0,\ell_0)$. The same holds for $c_{m_0}=a$ and $d_{n_0}=b$.
Each sum in \eqref{eq:merged-coefficients} contains at most one term,
so we may write
\begin{equation}\label{E11}
R(z,w)=P(z)Q(w)-U(z)V(w),
\end{equation}
where
$$
(P,Q)=(p_{j_0},q_{\ell_0}^*)\quad\text{and}\quad
(U,V)=(r_{m_0},s_{n_0}^*)
$$
when the corresponding index pairs exist. If either sum is
empty, we take its two factors to be zero. Write
$$
\begin{aligned}
P(z)&=\sum_{r=0}^{N_P}u_rz^r,
&Q(w)&=\sum_{s=0}^{N_Q}v_sw^s,\\
U(z)&=\sum_{\mu=0}^{N_U}x_\mu z^\mu,
&V(w)&=\sum_{\nu=0}^{N_V}y_\nu w^\nu,
\end{aligned}
$$
where $N_P,N_Q,N_U,N_V$ are nonnegative integers.
We take coefficients beyond the corresponding bounds to be zero.
Expanding the products in \eqref{E11}, we obtain
$$
\begin{aligned}
P(z)Q(w)-U(z)V(w)
&=\sum_{r=0}^{N_P}\sum_{s=0}^{N_Q}
u_rv_s z^rw^s\\
&\quad-\sum_{\mu=0}^{N_U}\sum_{\nu=0}^{N_V}
x_\mu y_\nu z^\mu w^\nu.
\end{aligned}
$$
Comparing the coefficients of $z^rw^s$ with \eqref{E8} gives
\begin{equation}\label{E12}
t_{rs}=u_rv_s-x_ry_s
\end{equation} 
for $0\le r,s\le m$. To write this relation in matrix
form, set
$$
\begin{aligned}
\mathbf u&=(u_0,\ldots,u_m)^{\mathrm T},
&\mathbf v&=(v_0,\ldots,v_m)^{\mathrm T},\\
\mathbf x&=(x_0,\ldots,x_m)^{\mathrm T},
&\mathbf y&=(y_0,\ldots,y_m)^{\mathrm T},
\end{aligned}
$$
where ${}^{\mathrm T}$ denotes transpose.
By \eqref{E12}, for $0\le r,s\le m$ we have
$$
\bigl(\mathbf u\mathbf v^{\mathrm T}
-\mathbf x\mathbf y^{\mathrm T}\bigr)_{rs}
=u_rv_s-x_ry_s=t_{rs}.
$$
Equality of the corresponding entries shows that
$$
M=\mathbf u\mathbf v^{\mathrm T}
-\mathbf x\mathbf y^{\mathrm T}.
$$
In terms of its columns, $M$ takes the form
$$
\begin{aligned}
M
&=
\begin{pmatrix}
u_0v_0-x_0y_0 & \cdots & u_0v_m-x_0y_m\\
\vdots & \ddots & \vdots\\
u_mv_0-x_my_0 & \cdots & u_mv_m-x_my_m
\end{pmatrix}\\
&=
\begin{pmatrix}
v_0\mathbf u-y_0\mathbf x
& \cdots &
v_m\mathbf u-y_m\mathbf x
\end{pmatrix}.
\end{aligned}
$$
Every column of $M$ belongs to
$\operatorname{span}\{\mathbf u,\mathbf x\}$. By \eqref{E10},
$$
m+1=\operatorname{rank}M
\le \dim\operatorname{span}\{\mathbf u,\mathbf x\}
\le2.
$$
This implies $\deg R=m\le1$, and we may write
$$
R(z,w)=c+uz+vw,
$$
where $c,u,v\in\C$ are constants. By equations \eqref{E1} and
\eqref{E5},
$$
0=D_{a,b}R=bu+\overline a v.
$$
Since $a\ne0$, the relation $bu+\overline a v=0$ implies
$v=-bu/\overline a$. Substituting this expression for $v$, we obtain
$$
R(z,w)=c+\frac{u}{\overline a}(\overline a z-bw)
=c+d(\overline a z-bw)
\qquad\text{for some }d\in\C.
$$
We now set $w=\overline z$ and combine this identity with
$\overline a b\in2\pi i\mathbb Z$ from \eqref{E4} to obtain the
second case of \eqref{eq:coefficient-condition}.
\end{proof}

The next example gives a nonconstant coefficient satisfying
\eqref{eq:coefficient-condition}.
\begin{remark}\label{rem:affine-example}
Set
$$
f(z)=ze^z,\quad h(z)=e^z,\quad
g(z)=e^{-2\pi iz},\quad
k(z)=-2\pi iz\,e^{-2\pi iz}.
$$
These four functions belong to $\mathcal A_1$, and their products satisfy
$$
f\overline g-h\overline k
=(z-2\pi i\overline z)e^{z+2\pi i\overline z}.
$$
By \eqref{eq:merged-coefficients}, the only nonzero coefficient is
$$
C_{1,2\pi i}(z,w)=z-2\pi iw.
$$
For $(a,b)=(1,2\pi i)$, we have
$$
\overline a b=2\pi i\quad\text{and}\quad
C_{1,2\pi i}(z,\overline z)
=z-2\pi i\overline z
=\overline a z-b\overline z.
$$
This verifies \eqref{eq:coefficient-condition} with
$c_{a,b}=0$ and $d_{a,b}=1$.
Proposition \ref{prop:coefficient-criterion} and
Lemma \ref{lem:berezin-injectivity} (b) show that
$T_{f+\overline k}$ and $T_{h+\overline g}$ commute.
Applying \eqref{eq:polynomial-block-transform} to the two products
at $(a,b)=(1,2\pi i)$ gives
$$
\mathcal B[f\overline g]-f\overline g
=\mathcal B[h\overline k]-h\overline k
=2\pi i e^{z+2\pi i\overline z}\ne0.
$$
By Lemma \ref{lem:berezin-injectivity} (a), this nonzero value
excludes condition (c)(i) of Theorem \ref{thm:bauer-characterization}.
The families $\{f,h,1\}$ and $\{g,k,1\}$ are linearly independent,
so conditions (c)(iii) and (c)(iv) also fail.
Finally, a relation
$a(f+\overline k)+b(h+\overline g)+c=0$
gives, upon differentiation with respect to $z$,
$$
\bigl[a(z+1)+b\bigr]e^z=0.
$$
Hence $a=b=c=0$, which excludes (c)(ii).
\end{remark}

\section{The corrected characterization}

Throughout this section, we assume that
$f,g,h,k\in\Hol(\C)\cap\mathcal E(\C)$.
We use the three cases in \cite[Section 4]{Bauer2015}.
In case (A), all of $f,g,h,k$ belong to $\mathcal A_1$.
In case (S), some but not all of them belong to this algebra.
In case (N), none of them does.
The results in \cite[Lemmas 4.6--4.8 and Proposition 4.9]{Bauer2015}
do not depend on Proposition 4.5.

Combining Proposition \ref{prop:coefficient-criterion}
with the results for cases (S) and (N) gives the following
characterization of fixed points of the Berezin transform.
\begin{theorem}[Correction of {\cite[Theorem 4.14]{Bauer2015}}]
\label{thm:corrected-fixed-points}
Let $f,g,h,k\in\Hol(\C)\cap\mathcal E(\C)$. Then
$$
\mathcal B[f\overline g-h\overline k]
=f\overline g-h\overline k
$$
if and only if at least one of the following conditions holds.
\begin{enumerate}
\item[(i)] $\mathcal B[f\overline g]=f\overline g$ and
$\mathcal B[h\overline k]=h\overline k$.
\item[(ii)] $f\overline g-h\overline k$ is harmonic.
\item[(iii)] There are constants $a,b,c\in\C$ with $ab\ne0$ such that
$$
af+bh=c\quad\text{and}\quad
\mathcal B[f(b\overline g+a\overline k)]
=f(b\overline g+a\overline k).
$$
\item[(iv)] There are constants $a,b,c\in\C$ with $ab\ne0$ such that
$$
ag+bk=c\quad\text{and}\quad
\mathcal B[(\overline b f+\overline a h)\overline g]
=(\overline b f+\overline a h)\overline g.
$$
\item[(v)] All four functions belong to $\mathcal A_1$, and the
family $C=(C_{a,b})_{(a,b)\in\Lambda}$ in
\eqref{eq:merged-coefficients} satisfies
\eqref{eq:coefficient-condition}.
\end{enumerate}
\end{theorem}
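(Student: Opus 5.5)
We prove the two directions separately and split the necessity direction according to the three cases (A), (S), (N) of \cite[Section 4]{Bauer2015}.

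\emph{Sufficiency.} Each alternative implies $\mathcal B[F]=F$ for $F=f\overline g-h\overline k$.
\begin{enumerate}
\item[(i)] Use linearity of $\mathcal B$.
\item[(ii)] $\mathcal B$ fixes harmonic functions in $\Sym(\C)$, by the mean value property applied to the Gaussian average $\int u(z+w)\,d\mu(w)$; functions of exponential type are admissible here.
\item[(iii)] Write $h=(c-af)/b$. Then
$$h\overline k=\frac{c\overline k}{b}-\frac{a}{b}f\overline k,\qquad F=\frac1b f(b\overline g+a\overline k)-\frac{c}{b}\overline k.$$
The first term is fixed by hypothesis, and the antiholomorphic term $\overline k$ is harmonic, hence fixed.
\item[(iv)] This is symmetric to (iii): solve $k=(c-ag)/b$ and rewrite $F$ as $\overline b^{-1}(\overline b f+\overline a h)\overline g$ plus a holomorphic multiple of the constant $\overline c/\overline b$. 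Complex conjugation of the constants needs care here.
\item[(v)] This is the sufficiency half of Proposition~\ref{prop:coefficient-criterion}.
\end{enumerate}

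\emph{Necessity.} Assume $\mathcal B[F]=F$.
\begin{itemize}
\item[(A)] If all four functions lie in $\mathcal A_1$, the necessity half of Proposition~\ref{prop:coefficient-criterion} gives condition \eqref{eq:coefficient-condition}, so (v) holds. No further reduction is needed, since (v) is listed as its own alternative.
\item[(S)] If some but not all of the functions lie in $\mathcal A_1$, invoke \cite[Lemmas 4.6--4.8]{Bauer2015}, which the paper notes are independent of Proposition~4.5.
\item[(N)] If none lies in $\mathcal A_1$, invoke \cite[Proposition 4.9]{Bauer2015}.
\end{itemize}
In cases (S) and (N) one has to check what these results actually conclude: that $F$ is harmonic, that both products are fixed, or that an affine relation $af+bh=c$ or $ag+bk=c$ with $ab\neq0$ holds. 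In the last situation, substituting the relation back into $F$ as in the sufficiency computation shows that the fixed-point condition in (iii) or (iv) follows from $\mathcal B[F]=F$, because the remaining term is (anti)holomorphic and hence fixed.

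\emph{Main obstacle.} The hardest step is making sure that the original (S) and (N) arguments never pass through the faulty Case 2 inference, that is, splitting $\mathcal B[E_f\overline{E_g}-E_h\overline{E_k}]$ into two separately fixed summands. I would reread \cite[Lemmas 4.6--4.8, Proposition 4.9]{Bauer2015} and track where the non-$\mathcal A_1$ functions enter. The plan would be to use the exponential-type growth together with \cite[Lemma 2.9]{Bauer2015} to complexify $F$ to an entire function of $(z,w)$, and then use the linear independence of the relevant exponential/non-exponential components \cite[Theorem 3.5]{Bauer2015}. If one of the functions is outside $\mathcal A_1$, a term-matching argument forces the affine relations or harmonicity, and no cross-cancellation of the kind exhibited in Theorem~\ref{thm:counterexample} can occur, because that cancellation requires matching exponential parameters inside $\mathcal A_1$.

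If a mixed subcase in (S) still leaves the $\mathcal A_1$ parts of all four functions interacting, I would reduce to Proposition~\ref{prop:coefficient-criterion}. The non-$\mathcal A_1$ parts then force the relevant function to be an affine combination of another, landing in (iii) or (iv).
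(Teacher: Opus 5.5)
Your sufficiency half is sound. For (iii) and (iv), the direct substitutions
$F=\frac1b f(b\overline g+a\overline k)-\frac cb\overline k$ and $F=\frac1{\overline b}(\overline b f+\overline a h)\overline g-\frac{\overline c}{\overline b}h$
are a cleaner route than the paper's appeal to \cite[Lemma 4.13]{Bauer2015} plus complex conjugation. Given $\mathcal B[F]=F$, the same substitution also turns any affine relation with $ab\ne0$ into (iii) or (iv). Case (A) matches the paper.

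The gap is in the necessity direction for cases (S) and (N), which you do not actually establish. You assign \cite[Lemmas 4.6--4.8]{Bauer2015} to (S) and \cite[Proposition 4.9]{Bauer2015} to (N), and then leave open what they conclude. In the paper, Proposition 4.9 belongs to case (S), where it is combined with \cite[Lemma 4.12]{Bauer2015}, applied to $f,g,-h,k$. The three alternatives of Lemma 4.12 are harmonicity, a relation $af+bh=c$, and a relation $ag-bk=c$ (the last two each with a fixed-point identity). These yield (ii), (iii) and (iv), the last after $b\mapsto-b$. Case (N) is handled by \cite[Proposition 4.10]{Bauer2015}, which gives (ii). Your references never reach Lemma 4.12 or Proposition 4.10, so as written nothing supplies the alternatives in these cases.

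You also omit the reduction the paper makes before the case split. If one of $f,g,h,k$ is constant, one product is harmonic and hence fixed, so by linearity the other product is fixed and (i) holds. This matters because constants lie in $\mathcal A_1$, so case (S) can contain constant functions, and the paper invokes the case-(S) results only after this reduction.

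Finally, the fallback in your ``main obstacle'' paragraph is not a proof. The claim that no cross-cancellation can occur once some function leaves $\mathcal A_1$ is precisely what the case-(S) argument must show. Proposition~\ref{prop:coefficient-criterion} cannot be applied to ``the $\mathcal A_1$ parts'' in a mixed case, since it requires all four functions to lie in $\mathcal A_1$. The paper does not re-derive anything here; it relies on these results of \cite[Section 4]{Bauer2015} being independent of the faulty Proposition 4.5.
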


\begin{proof}
Set $W=f\overline g-h\overline k$.
Conditions (i) and (ii) are sufficient because $\mathcal B$
is linear and fixes harmonic functions in $\mathcal E(\C)$.
For (iii), apply \cite[Lemma 4.13]{Bauer2015} to $f,g,-h,k$.
Under (iv), complex conjugation of the second identity gives
$$
\mathcal B[g(b\overline f+a\overline h)]
=g(b\overline f+a\overline h).
$$
The same lemma, applied to $g,f,-k,h$, yields
$\mathcal B[\overline W]=\overline W$.
Taking complex conjugates gives $\mathcal B[W]=W$.
Condition (v) is sufficient by Proposition \ref{prop:coefficient-criterion}.

For necessity, assume $\mathcal B[W]=W$.
If one of the four functions is constant, one product is harmonic
and hence fixed. Linearity implies that the other product is fixed
as well, so (i) holds.
We may now assume that all four functions are nonconstant.

In case (A), Proposition \ref{prop:coefficient-criterion} gives (v).
In case (S), apply \cite[Proposition 4.9 and Lemma 4.12]{Bauer2015}
to $f,g,-h,k$.
The first two alternatives in Lemma 4.12 give (ii) and (iii),
respectively. Its third alternative gives
$$
ag-bk=c,
\qquad
\mathcal B[(\overline b f-\overline a h)\overline g]
=(\overline b f-\overline a h)\overline g
$$
for some $a,b,c\in\C$ with $ab\ne0$.
Replacing $b$ by $-b$ and multiplying the second identity by $-1$
gives (iv).
In case (N), \cite[Proposition 4.10]{Bauer2015},
applied to $f,g,-h,k$, gives (ii). The proof is complete.
\end{proof}

Lemma \ref{lem:berezin-injectivity} (a) converts condition (i) of
Theorem \ref{thm:corrected-fixed-points} into the vanishing of the
two semi-commutators. When all four functions are nonconstant,
\cite[(3.20)]{Bauer2015}, applied to $f,g,-h,k$, identifies its
harmonic alternative with the linear dependence of
$f+\overline k,h+\overline g,1$. If one function is constant,
the harmonic alternative is already covered by (i).
Together with Lemma \ref{lem:berezin-injectivity} (b), these
observations give the following characterization.

\begin{theorem}[Correction of {\cite[Theorem 4.15]{Bauer2015}}]
\label{thm:corrected-commuting}
Let $f,g,h,k\in\Hol(\C)\cap\mathcal E(\C)$. The following
statements are equivalent.
\begin{enumerate}
\item[(a)] $[T_{f+\overline k},T_{h+\overline g}]=0$.
\item[(b)] $\mathcal B[f\overline g-h\overline k]
=f\overline g-h\overline k$.
\item[(c)] At least one of the following conditions holds.
\begin{enumerate}
\item[(i)] $(T_f,T_{\overline g}]=(T_h,T_{\overline k}]=0$.
\item[(ii)] $\{f+\overline k,h+\overline g,1\}$ is linearly dependent.
\item[(iii)] There are constants $a,b,c\in\C$ with $ab\ne0$ such that
$$
af+bh=c\quad\text{and}\quad
\mathcal B[f(b\overline g+a\overline k)]
=f(b\overline g+a\overline k).
$$
\item[(iv)] There are constants $a,b,c\in\C$ with $ab\ne0$ such that
$$
ag+bk=c\quad\text{and}\quad
\mathcal B[(\overline b f+\overline a h)\overline g]
=(\overline b f+\overline a h)\overline g.
$$
\item[(v)] All four functions belong to $\mathcal A_1$, and the
family $C=(C_{a,b})_{(a,b)\in\Lambda}$ in
\eqref{eq:merged-coefficients} satisfies
\eqref{eq:coefficient-condition}.
\end{enumerate}
\end{enumerate}
\end{theorem}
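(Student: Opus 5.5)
The equivalence (a)$\Leftrightarrow$(b) is exactly Lemma \ref{lem:berezin-injectivity} (b), since $\Hol(\C)\cap\mathcal E(\C)\subset\Hol(\C)\cap\Sym(\C)$. It therefore suffices to show that (b) is equivalent to (c). For this I will use Theorem \ref{thm:corrected-fixed-points}, which already states that (b) holds if and only if one of its alternatives (i)--(v) holds. The task is to translate those alternatives into the operator-theoretic alternatives (c)(i)--(c)(v).

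The translation goes alternative by alternative. Alternatives (iii), (iv), and (v) are literally the same in both theorems. For (i), Lemma \ref{lem:berezin-injectivity} (a), applied to the pairs $(f,g)$ and $(h,k)$, shows that $\mathcal B[f\overline g]=f\overline g$ and $\mathcal B[h\overline k]=h\overline k$ hold exactly when $(T_f,T_{\overline g}]=(T_h,T_{\overline k}]=0$. The only nontrivial point is matching the fixed-point alternative (ii), harmonicity of $W=f\overline g-h\overline k$, with (c)(ii), linear dependence of $\{f+\overline k,h+\overline g,1\}$.

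\emph{Direction (c)$\Rightarrow$(b).} Alternatives (c)(i) and (c)(iii)--(v) give (b) through the dictionary above and Theorem \ref{thm:corrected-fixed-points}. For (c)(ii), I would argue directly as follows. Suppose $\alpha(f+\overline k)+\beta(h+\overline g)+\gamma=0$ with $(\alpha,\beta,\gamma)\ne0$. If $\beta\ne 0$, then $T_{h+\overline g}$ is a linear combination of $T_{f+\overline k}$ and the identity, so the commutator vanishes; the case $\alpha\ne0$ is symmetric. If $\alpha=\beta=0$, then $\gamma=0$, which is excluded. This yields (a) and hence (b).

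\emph{Direction (b)$\Rightarrow$(c).} Assume (b); by Theorem \ref{thm:corrected-fixed-points} one of its alternatives holds, and only the case where $W$ is harmonic needs work. If some function is constant, I would re-run the first step of the proof of Theorem \ref{thm:corrected-fixed-points}. One product is then harmonic and therefore fixed by $\mathcal B$. Linearity makes the other product fixed as well, so (i) of that theorem holds, and hence (c)(i). If all four functions are nonconstant, I would invoke \cite[(3.20)]{Bauer2015} applied to $f,g,-h,k$. As the paper notes, this identifies harmonicity of $f\overline g-h\overline k$ with linear dependence of $\{f+\overline k,h+\overline g,1\}$, which gives (c)(ii). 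Concretely, $\partial_z\partial_{\overline z}W=f'\overline{g'}-h'\overline{k'}=0$ forces $f',h'$ and $g',k'$ into matching proportionality.

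The main obstacle is making sure that \cite[(3.20)]{Bauer2015} is stated with the sign convention $f\overline g+h\overline k$, so that substituting $-h$ gives exactly the family $\{f+\overline k,h+\overline g,1\}$ rather than $\{f-\overline k,\dots\}$. I would check this by the direct computation. Harmonicity gives $f'(z)\overline{g'(z)}=h'(z)\overline{k'(z)}$, and then \cite[Lemma 2.9]{Bauer2015} upgrades this to $f'(z)(g^*)'(w)=h'(z)(k^*)'(w)$. Since all four functions are nonconstant, there is a $\lambda\ne0$ with $h'=\lambda f'$ and $g'=\overline\lambda k'$. Integrating gives $h+\overline g=\lambda(f+\overline k)+\mathrm{const}$, which is the required dependence.
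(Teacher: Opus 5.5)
Your proposal is correct and follows the paper's route. You get (a)$\Leftrightarrow$(b) from Lemma~\ref{lem:berezin-injectivity}(b), translate alternative (i) of Theorem~\ref{thm:corrected-fixed-points} via Lemma~\ref{lem:berezin-injectivity}(a), and match its harmonic alternative with (c)(ii) by splitting into the constant and nonconstant cases, where you verify the content of \cite[(3.20)]{Bauer2015} directly through $f'(z)(g^*)'(w)=h'(z)(k^*)'(w)$; your operator-level argument for (c)(ii)$\Rightarrow$(a) is valid and makes explicit a step the paper leaves implicit.
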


Two or more of the original four conditions may hold
at the same time. In case (A),
Proposition \ref{prop:coefficient-criterion} reduces the problem
to checking finitely many conditions on the coefficients.
In cases (S) and (N), the original four alternatives still
cover all possibilities. For the example in \eqref{eq:symbols}, the terms at
$(a,b)=(1,\pi i)$ cancel. The only nonzero coefficients are $C_{1,2\pi i}=1$ and
$C_{2,\pi i}=-1$. Both parameter pairs satisfy $\overline a b=2\pi i$,
so these constant coefficients satisfy
\eqref{eq:coefficient-condition} with $d_{a,b}=0$.


\begin{thebibliography}{9}
\bibitem{Axler2024}
S. Axler,
\emph{Linear Algebra Done Right},
4th ed., Undergraduate Texts in Mathematics,
Springer, Cham, 2024.
\href{https://doi.org/10.1007/978-3-031-41026-0}
{doi:10.1007/978-3-031-41026-0}.
\bibitem{Bauer2015}
W. Bauer, B. R. Choe, and H. Koo,
\emph{Commuting Toeplitz operators with pluriharmonic symbols on the Fock
space},
J. Funct. Anal. \textbf{268} (2015), 3017--3060.
\href{https://doi.org/10.1016/j.jfa.2015.03.003}{doi:10.1016/j.jfa.2015.03.003}.

\end{thebibliography}
\end{document}